\begin{document}
\frontmatter
\mainmatter

\title{Probabilistic Generative Model of Social Network Based on Web Features}

\titlerunning{Probabilistic Generative Model of Social Network ...}

\author{Mahyuddin K. M. Nasution\inst{1,2} \and Shahrul Azman Noah\inst{2}}

\authorrunning{Nasution, M. K. M.}
\tocauthor{Noah, S. A.}
\institute{Department of  Mathematics FMIPA USU, Medan, Indonesia\\
\email{mahyunst@yahoo.com}, \email{mahyuddin@usu.ac.id}
\and
Knowledge Technology Group Research FTSM UKM, Bangi, Malaysia\\
\email{samn@ftsm.ukm.my}}
\maketitle

\begin{abstract}
In this paper, we develop a dynamic framework for the modeling and analysis of social networks to work with web documents. We illustrate the model with features of web, design a form to analyze relationships of attributes as a modality of social structure, and create the optimization of generative model based on Bayes Theorem.\\
Keywords: conditional probability, graph, similarity, singleton, doubleton, framework.
\end{abstract}

\section{Introduction}
Social network describes a group of social entities and the pattern of inter-relationships among them. The concept of social networks is designed to map the relationship of entities among all of them that can be observed, to mark the patterns of ties between entities, to measure social capital: the values obtained by the entities individually or in groups, to present a variety of social structures according to the interests and its implementation, based on different domains or information sources \cite{nasution2010a}. Group discovery has many applications, such as understanding the social structure of organizations or native tribes. In law enforcement this is about organized crimes such as drugs and money laundering \cite{xu2004} or terrorism \cite{krebs2002}, knowing how the perpetrators are connected to one another would assist the effort to disrupt a criminal act or to identify additional suspects. In commerce, viral marketing exploits the relationship between existence of customers and potential customers to increase sales of products and services \cite{richardson2002,kempe2003}. Members of a social network may also take advantage of their connections to get to know others, for instance through web sites facilitating networking or dating among their users \cite{boyd2004}.

Social networks explicitly exhibit relationships (called \emph{ties} in social sciences) among individuals and groups (called \emph{actors}). They have studied social sciences since the 1930s. Social scientists have conducted extensive research on group detection, especially in fields such as anthropology and political science. Recently, statisticians and computer scientists have begun to develop models that specifically discover group memberships \cite{nowicki2001,kubica2002,bhattacharya2004}. There are two models use probability for characterizing information sources as well as image, has become a tool in machine learning research: probabilistic generative and relative models. Therefore, the approaches that addresses the issues of social network generally fall into two categories also, and this paper explores  their relationship in order to obtain a framework to investigate social networks by engaging web features. 

\section{Related Work and Problem}
Technically, an object called the network is a graph. A graph consist of a set of points along with a set of lines connecting pairs of points. Formally, a graph denoted by $G(V,E)$, where $V\ne\emptyset$ is a set of vertices or nodes, $V = \{v_i|i=1,\dots,I\}$ and $E$ is a set of edges connects between pair of vertices, $E = \{e_j|j=1,\dots,J\}$. The  
nodes in a social network refer to actor names such as authors, recipients, researchers, artists, politicians, firms, organizations, or any entity, i.e.,

\begin{definition}
A set of actors $A = \{a_i|i=1,\dots,I\}$ and there is a function $\xi$ such that $\xi : A \stackrel{1:1}{\rightarrow} V$, or $\forall a\in A \exists! v\in V$.
\end{definition}

Each actor plays some role in the social interactions based on his/her background and then achievements obtained in each occasion, or kinds of: new articles and academic publications. In many situations, these are considered as attributes or characteristics, and the characteristic can be defined as follows.

\begin{definition}
Let $Z = \{z_k|k=1,\dots,K\}$ is a set of attributes, and a pair of $\langle A,Z\rangle$ is the instance of actors, where $Z_i$ is subsets of $Z$, $Z_i$ are subsets of attributes of each actor $a_i$, i.e., $\langle a_i,Z_i\rangle$, $i=1,\dots,I$, simply we denotes a set of attributes of actor $a$ as $Z_a$.
\end{definition}

A social network is a network based on the relations between people in their society. Therefore, we can model an approach of other social network. When a computer network connects people, it is a social network \cite{garton1997}. Just as a computer network is a set of machines connected by a set of media (cables or airwaves), a social network also is a set of people connected by a set of social relationships such as friendship, co-working or information exchange such as Web. Information of people in web documents is very different from information of people in database. In any documents, the objects (actors and attributes) can be given literally, like the literal text of \emph{Indonesia}, then all meaning of object based on words represented by the literal objects itself. To realize it, first we define that a word $w$ is the basic unit of discrete data, defined to be an item from a vocabulary indexed by $\{1,\dots,K\}$, where $w_k = 1$ if $k\in K$, and $w_k=$ otherwise. Then, we define some instances related to words.

\begin{definition}
\label{def:doc}
A document is a sequence of $n$ words denoted by $D = \{w_1,\dots,$ $w_N\}$, where $w_n$ is the $n$-th word in the sequence. Size of document is a cardinality of $D$, i.e., $|D| = N$. ${\cal D} =\{D_1,\dots,D_M\}$ is a collection of $M$ documents that is called a corpus.
\end{definition}

\begin{definition}
A term $t_k$ consist of at least one word, or $t_k = (w_1,\dots,w_l)$, $l\leq k$, $k$ is a number of parameters representing words. $|t_k| = k$ is the number of word of $t_k$, and $l$ is the number of vocabularies in $t_k$.
\end{definition}

Any information available on the web may be obtained with the help of search engines. The search engine is an important part of internet and is one of the easiest and useful tools to research information or to find websites. A search engine allows to categorize and make sense of the information that is available online. The search results generally presented in a list of results and called as \emph{hits}, where information may consist of texts, images, video, hypertext, etc.

\begin{definition}
Let the set of Web pages indexed by search engine be $\Omega$, i.e., a set contains ordered pair of the term $t_{k_i}$ and the web page $\omega_{k_j}$, $(t_{k_i},\omega_{k_j})$, $i=1,\dots,I$, $j=1,\dots,J$. The relation table that consist of two columns $t_k$ and $\Omega_k$, the table is a representation of $(t_{k_i},\omega_{k_j})$, where $\Omega_k = \{(t_k,\omega_k)_{ij}\}\subseteq \Omega$ or $\Omega_k = \{\omega_{k_1},\dots,\omega_{k_j}\}$. The cardinality of $\Omega$ is denoted by $|\Omega|$, and uniform mass probability function is $P:\Omega\rightarrow[0,1]$.
\end{definition} 

\begin{definition}
\label{def:singleton}
Let $t_x$ is a search term, and $t_x\in{\cal S}$ where ${\cal S}$ is set of singleton search term of search engine. A vector space ${\bf x}\subseteq\Omega$ is a singleton search engine event of Web pages that contain an occurrence of $t_x\in\omega_x$, and probability of an event ${\bf x}$ is $P({\bf x}) = |{\bf x}|/|\Omega|\in[0,1]$.
\end{definition}

\begin{proposition}
\label{prop:doubleton}
Let two singleton events $\Omega_x$ and $\Omega_y$ for search terms $t_x$ and $t_y$ respectively. $\Omega_x\cap\Omega_y$ is a doubleton event of $t_x$ and $t_y$ such that $P({\bf x},{\bf y}) = |\Omega_x\cap\Omega_y|/|\Omega|$. 
\end{proposition}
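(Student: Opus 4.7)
The plan is to work directly from the definitions already in hand. By Definition \ref{def:singleton}, the singleton event $\Omega_x$ is the collection of indexed web pages on which $t_x$ occurs, and $\Omega_y$ is the analogous collection for $t_y$. First I would observe that a page $\omega\in\Omega_x\cap\Omega_y$ is exactly one on which both $t_x$ and $t_y$ occur, so this intersection is a well-defined subset of $\Omega$ and qualifies as the ``doubleton'' event attached to the pair $(t_x,t_y)$ in direct parallel with the singleton construction.

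Next, I would invoke the uniform mass probability function $P:\Omega\to[0,1]$ stated just before Definition \ref{def:singleton}. Under a uniform measure, the probability of any event $E\subseteq\Omega$ equals $|E|/|\Omega|$, since each indexed page carries mass $1/|\Omega|$. Applied to $E=\Omega_x\cap\Omega_y$, this gives $P(\Omega_x\cap\Omega_y)=|\Omega_x\cap\Omega_y|/|\Omega|$. Identifying the joint-event notation $({\bf x},{\bf y})$ with the intersection of the two underlying page sets, as is standard when probabilities are defined on a shared sample space, yields the claimed equality $P({\bf x},{\bf y})=|\Omega_x\cap\Omega_y|/|\Omega|$.

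The main subtlety, and the only point that needs care, is the bookkeeping between the relation-table object $\Omega_x$ used in the earlier definition and the vector-space event ${\bf x}$ appearing in Definition \ref{def:singleton}: I would note that both are indexed by precisely the set of pages containing $t_x$, so their cardinalities coincide and set-theoretic intersection is preserved under this identification. Once that correspondence is made explicit, the proposition reduces to a single application of the uniform probability formula, so no further calculation is required.
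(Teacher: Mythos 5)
Your argument is correct and follows essentially the same route as the paper: the paper likewise applies the set intersection to the singleton definition and reads off $P({\bf x},{\bf y}) = |\Omega_x\cap\Omega_y|/|\Omega|$ from the uniform mass probability on $\Omega$. Your version merely makes explicit the uniform-measure step and the identification of ${\bf x}$ with $\Omega_x$, which the paper leaves implicit as a ``direct conclusion.''
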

\begin{proof}
By using intersection operator of set to Definition \ref{def:singleton}, we have a direct conclusion, i.e., $P({\bf x}) = |\Omega_x|/|\Omega|$ dan $P({\bf y}) = |\Omega_y|/|\Omega|$ $\Rightarrow$ $
P({\bf x},{\bf y}) = |\Omega_x\cap\Omega_y|/|\Omega|$.
\end{proof}

A search for a particular index term, say $t_x$, it returns a certain number of hits $n_x$, i.e., number of web pages where this term occurred, we obtain $p(t_x) = n_x/|\Omega|$. So for $(t_x,t_y)$ we have a doubleton $n_{xy}$, and $n_x \geq n_{xy}$. $n_{xy}/|\Omega|$ means that $p(t_x|t_y)$ or $p(t_y|t_x)$ are the conditional probabilities. It is clear that a doubleton is a conditional probability of a term for other term.
  
We note that in the conditional probabilities the total number of web pages indexed by search engine, $|\Omega|$ is divided out. Therefore, the conditional probabilities are independent of $|\Omega|$. The conditional probability $P({\bf x}|{\bf y})>0$ means that the search terms $t_x$ and $t_y$ occur together in some web pages or co-occurrence, but also parts of $t_x$ or $t_y$ occur together such that ${\bf x}$ or ${\bf y}$ are in bias.

The edges in a social network refer to ties. A tie relates two actors. Ties could be directed or undirected, and they could be dichotomous (present or absent) or valued (weighted). There may be many types of ties (e.g., kinship, friendship) and the collection of all ties of the same type is a relation. Relations, sometimes called strands, are characterized by content, direction and strength. Let $R$ is a set of relations, the relations among actors formed by sharing attributes, ideas, concepts, etc, between them, which can be depicted as the intersection between their attributes \cite{nasution2011} as follows
\begin{equation}
r_k(a,b)=Z_a\cap Z_b, r_k\in R.
\end{equation}
The content of a relation refers to the source that is exchanged, such as communication about administrative, personal, work-related or social matters. Communication, defined generally as transfer of information or resources, is common among socially related people whereby the electronic trails of communication can be traced include emails \cite{schwartz1993}, newsgroups \cite{agrawal2003}, and instant messaging \cite{resig2004}. In the content, sometime we find self-report, links reported by individual actors. Such links are directed and naturally subjective, such as in classical tools like questionnaires and interviews are based on this principle \cite{wasserman1994}, homepages or profile pages in community-centric sites: LiveJournal weblogs \cite{kumar2004} or Facebook \cite{boyd2004} commonly display a self-professed list of friends within the community. Therefore, a relation can be directed or undirected: one person may give social support to a second person, or there are two relations here: giving support and receiving support. The relations also differ in strength. Such strength can be operationalized in a number of ways \cite{marsden1984,wellman1990}. Therefore, the types of relations important in social network research, it have included the exchange of complex or difficult information \cite{fish1992}, emotional support \cite{fish1992,haythornthwaite1995,rice1987}, uncertain or equivocal communication \cite{daft1986,vandeven1979}, and communication to generate ideas, created consensus \cite{kiesler1992,mccallum2004,mccallum2005,mccallum2007,mcgrath1983,mcgrath1990,mcgrath1991}, support work, forter sociable relations \cite{garton1995,haythornthwaite1996c}, or support virtual community \cite{wellman1997}. In general, the strength relation generated by similarity measures, and some of them are dice, overlap, dan Jaccard \cite{matsuo2006,matsuo2007,nasution2010b} as follows
\begin{equation}
sim(x,y) = \frac{2n_{{\bf x}\wedge{\bf y}}}{n_{\bf x}+n_{\bf y}},
\end{equation}
\begin{equation}
sim(x,y) = \frac{n_{{\bf x}\wedge{\bf y}}}{\min(n_{\bf x},n_{\bf y})},
\end{equation}
and
\begin{equation}
\label{pers:jaccard}
sim(x,y) = \frac{n_{{\bf x}\wedge{\bf y}}}{n_{{\bf x}\vee{\bf y}}}.
\end{equation}
Similarity has its foundation on the sociological idea that friends tend to be alike \cite{carley1991}. Other forms of similarity include having the same communication partners \cite{schwartz1993} and sharing the same opinions or areas of interest \cite{richardson2002}. Each similarity measure relative to the other, and also against the probability, therefore we call these measures as probabilistic relative model. We generally define it as

\begin{definition}
Probabilistic relative model \emph{(PRM)} utilizes the Cartesian product for clustering the nodes in $A$, i.e.,
\[ 
\gamma : A\times A\rightarrow R 
\]
such that $\gamma(a,b)\in R$, $a,b\in A$.
\end{definition}
However, this model not only adriff with a bias in the measurement but also is difficult to produce descriptions of the relationship. In the real world and its application, a social network requires the labels with a weight as a modality as well as explaining the roles of each actor in the social. We define an approach to generate labels as follows

\begin{definition}
\label{definisi:pgm}
Probabilistic generative model \emph{(PGM)} employes a function $\lambda$ for classifying $Z$, i.e., 
\[
\lambda : Z \rightarrow C
\]
such that $\lambda(z) = c$, $z\in Z$, and $c\in C$ is a class of labels, where $C = \{c_1,c_2,\dots,$ $c_{|C|}\}$ is a data set as special target attributes, $|C|\ge 2$ is the number of classes, and $Z\cap C = \emptyset$.
\end{definition}

Statistical natural language processing, an analysis that capture the richness of the language contents of the interactions: the words, the topics, and other high-dimensional specifics of the interactions between actors. Statistically, Bayes theorem has paid dividents in the computing world, especially for artificial intelligence and learning, in which the PGM has played a role in particular. Some PGMs are a direct offspring of Latent Dirichlet Allocation (LDA) \cite{blei2003}, the Multi-label Mixture Model \cite{mccallum1999}, and the Author-Topic Model \cite{steyvers2004,rosenzvi2004}, with the distinction that ART is specifically designed to capture language used in a directed network of correspondents. However, the PGMs concern with extraction of network based on predefined labels only, and thus cannot be adapted to the another description of relation.

\section{The Concept of Probability Use}

The Bayes approach defines the classification problem in terms of probabilities. There are three main concepts required are conditional probability, Bayes theorem, and the bayes decision rule. The conditional probability $(a|D)$, which is used to define independent events of an actor to a document in a corpus, is defined by $P(a|D) = P(a\cup D)/P(D)$, where $P(a|D)$ is the probability that event $a\in A$ happens, given that $D$ is observed. Similarly, $P(D|a) = P(a\cup D)/P(a)$, where $P(D|a)$ is the probability that event $D$ happens, given that $a\in A$ is observed. It then follows (by substitution) that $P(a\cap D)=P(a)P(D|a)$. The premise of Bayes Theorem starts with an initial degree of belief that an event will occur, and then with new information about a degree of belief. These two degrees are reresented, respectively, by the prior probability $P(a|D)$ and the posterior probability $P(D|a)$, which are related by 
\begin{equation}
P(a|D) = \frac{P(a)P(D|a)}{P(D)}
\end{equation}
The Bayes decision rule states that based on the posterior probabilities, it is possible to assign an element $w$ to a class with the largest probability. For example, let $w$ be a data sample (vector of features) and $z_i$ one of the possible classes, then $P(w|z_i)$ is prior probability, because it can be obtained based on prior knowledge.

\begin{proposition}
\label{propos:prob}
Let $P : D \rightarrow [0,1]$ is a mass probability function whereby the probability of $w_i$ is $p(w_i) = 1/|D| \in [0,1]$. If $w_{v_i}$ are the vocabularies in document, then $p(w_{v_i}) = |w_{v_i}|/|D|$, where $|w_{v_i}|$ is number of $w_i$ in $D$. 
\end{proposition}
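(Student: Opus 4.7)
The plan is to treat $D$ as a finite sample space whose atomic outcomes are the $N=|D|$ word positions of the document, and to read $w_{v_i}$ as the event consisting of precisely those positions that are occupied by the vocabulary item under consideration. Once this is set up, the claim collapses to finite additivity of the uniform mass function $P$ over a finite disjoint union of singletons.

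First, I would invoke Definition \ref{def:doc}, which identifies the document with a sequence $D=\{w_1,\dots,w_N\}$, together with the hypothesis that $P:D\rightarrow[0,1]$ is the uniform mass probability function, so that $p(w_i)=1/|D|$ for each position $i\in\{1,\dots,N\}$. The singletons $\{w_i\}$ are pairwise disjoint and exhaust $D$, so $P$ is a well-defined probability on subsets of $D$ by additivity.

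Second, I would make the overloaded notation explicit by defining, for a fixed vocabulary item $w_{v_i}$, the event
\begin{equation}
E_{v_i}=\{\,i\in\{1,\dots,N\}:w_i\text{ equals the vocabulary item }w_{v_i}\,\}\subseteq D,
\end{equation}
and observing that by the very definition of the counting quantity appearing in the statement, $|E_{v_i}|=|w_{v_i}|$, the number of occurrences of that vocabulary item in $D$. Applying finite additivity of $P$ to the disjoint decomposition $E_{v_i}=\bigcup_{i\in E_{v_i}}\{w_i\}$ then gives $p(w_{v_i})=P(E_{v_i})=|w_{v_i}|\cdot(1/|D|)=|w_{v_i}|/|D|$, which is the conclusion.

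The main obstacle is not mathematical but notational: the symbol $w_i$ is used in Definition \ref{def:doc} for a token at position $i$, while $w_{v_i}$ in the statement refers to a word type drawn from the vocabulary, and the quantity $|w_{v_i}|$ silently switches from a per-type count of occurrences to the cardinality of the corresponding event in $D$. Once the sample space is fixed as the set of token positions and $w_{v_i}$ is identified with its fiber $E_{v_i}$, the remaining step is a one-line application of additivity and poses no real difficulty.
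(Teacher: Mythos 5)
Your argument is correct: the paper states this proposition without any proof at all (it is treated as immediate from Definition~\ref{def:doc} and then used to justify Lemma~\ref{lemma:prob}), and your formalization --- uniform mass $1/|D|$ on the $N$ token positions, the vocabulary item read as the event of positions carrying that type, and finite additivity over that fiber --- is exactly the intended content. Your explicit disentangling of the token/type overloading in $w_i$ versus $w_{v_i}$ is a clarification the paper itself never makes, but it does not change the substance; the proof matches what the paper implicitly assumes.
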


When working with general web documents in general. We explore the features of documents in a corpus. Thus it can be stated that the probability $P(w)$ is the conditional probability $P(w|D)$ of event $w$ to the document $D$ in the corpus ${\cal D}$. Here are the events of $w$ in the corpus.

\begin{lemma}
\label{lemma:prob}
Let probability of $w_{v_i}$ in $D$ is $p(w_{v_i}) = |w_{v_i}|/|D|$, then probability of $w_{v_i}$ in ${\cal D}$ is
\[
p(w_{v_i}) = \sum_{j=1}^M \frac{|w_{v_i}|}{M|D|} = \sum_{j=1}^M\sum_{k=1}^{|w_{v_i}|} \frac{1}{MN_j}.
\]
Set of $p(w_{v_i})$, ${\rm V} = [p(w_{v_i})|i=1,\dots,n]$ is a vector space where $p(w_{v_i})$ is a weight of $w_i$.
\end{lemma}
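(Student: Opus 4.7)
The plan is to lift Proposition \ref{propos:prob}, which gives the probability of a vocabulary item inside a single document, to the whole corpus ${\cal D}=\{D_1,\dots,D_M\}$ by treating the selection of a document as a uniform random event and then invoking the law of total probability. Since the corpus contains $M$ documents and no document is privileged, I would first assign the prior $p(D_j)=1/M$ to each $D_j\in{\cal D}$. By Proposition \ref{propos:prob}, the conditional mass inside a given document is $p(w_{v_i}\mid D_j)=|w_{v_i}|/|D_j|$, where $|w_{v_i}|$ denotes the number of occurrences of $w_{v_i}$ in $D_j$ and $|D_j|=N_j$ is its size (as fixed in Definition \ref{def:doc}).

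From here I would apply the total probability decomposition
\[
p(w_{v_i}) \;=\; \sum_{j=1}^M p(w_{v_i}\mid D_j)\,p(D_j) \;=\; \sum_{j=1}^M \frac{|w_{v_i}|}{M\,|D_j|},
\]
which is precisely the first equality in the statement (the symbol $|D|$ inside the sum is to be read as $|D_j|$). For the second equality, I would split each occurrence count by a unit telescoping: writing $|w_{v_i}|$ as the sum of $|w_{v_i}|$ ones and using $|D_j|=N_j$, the ratio $|w_{v_i}|/(M|D_j|)$ becomes $\sum_{k=1}^{|w_{v_i}|} 1/(MN_j)$, and collecting the nested sum yields the stated double-summation form.

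The closing assertion that ${\rm V}=[p(w_{v_i})\mid i=1,\dots,n]$ is a vector space of weights is essentially a definitional remark: once the first part has established $p(w_{v_i})\in[0,1]$, the $n$-tuple inherits its linear structure from $\mathbb{R}^n$ and the coordinates serve as weights attached to each vocabulary word $w_i$.

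The only real obstacle I foresee is not algebraic but notational. The symbol $|w_{v_i}|$ is used for a count that actually depends on $D_j$, and the same inconsistency blurs $|D|$ versus $N_j$. The careful part of the proof will therefore consist in re-indexing these quantities so that the transition from $\sum_j |w_{v_i}|/(M|D|)$ to $\sum_j\sum_k 1/(MN_j)$ is justified as a genuine equality rather than a purely symbolic rewriting; once the indices are pinned down document by document, the computation itself is immediate.
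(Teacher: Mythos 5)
Your proposal is correct and follows what is essentially the paper's own (unstated) route: the paper's proof consists solely of the remark that the lemma is a direct consequence of Definition \ref{def:doc} and Proposition \ref{propos:prob}, and your uniform document prior $p(D_j)=1/M$ together with the law of total probability is exactly the computation that remark leaves implicit, as is the unit telescoping giving the double sum. Your flagging of the notational sloppiness --- that $|w_{v_i}|$ and $|D|$ must be read document by document, with $|D_j|=N_j$ --- is a fair and necessary repair that the paper itself never makes explicit.
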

\begin{proof}
The direct consequence of Definition \ref{def:doc} and Proposition \ref{propos:prob}.
\end{proof}

Let $t_a$ is a search term, each search engine produces $n_a$ as hit counts, there are as many $n_a$ $ta\in\Omega$, or for all $t_a\in\omega_a$, $|\Omega_a| = n_a$, $t_a\in\Omega_a \subseteq \Omega$. Let us define that $\Omega = \sum_{j=1}^M N_j$, where $M$ is a number of web pages in $\Omega$ and $N_j$ are the number of words/terms in $\omega_j$. Assume that a set of web pages $\Omega$ as corpus ${\cal D}$, then we obtain 
\begin{equation}
P(t_a) = P(a) = |\Omega_a|/|\Omega| = \sum_{j=1}^M\sum_{k=1}^{|t_a|}\frac{1}{MN_j}
\end{equation}
Therefore, we can define that probability of $D\in{\cal D}$ is equivalent to probability of $\omega\in\Omega$.

\begin{proposition}
Let $p$ is a document $D$ from the corpus ${\cal D}$ with probability $P(d)$, then probability $P(\omega) = \sum_{j=1}^M N_j/|\Omega|$
\end{proposition}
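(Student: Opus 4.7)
The plan is to exploit the identification, made in the paragraph immediately preceding the proposition, between the corpus $\cal D$ and the indexed web $\Omega$: each document $D_j$ is viewed as a web page $\omega_j$, and $|\Omega|=\sum_{j=1}^{M}N_j$, where $N_j=|D_j|$ is the length of page $\omega_j$. Under this identification the uniform mass probability $P:\Omega\rightarrow[0,1]$ of Definition \ref{def:singleton} is compatible with the word-level mass $p(w_i)=1/|D|$ of Proposition \ref{propos:prob}, and the task reduces to counting word-occurrences the same way twice.

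First I would observe that, in the identification $\Omega\equiv{\cal D}$, the uniform mass on $\Omega$ assigns weight $1/|\Omega|$ to each of the $|\Omega|=\sum_{j}N_j$ word-slots of the indexed web. Since page $\omega_j$ occupies exactly $N_j$ such slots, its aggregate probability is $P(\omega_j)=N_j/|\Omega|$, which is the per-page analogue of the per-vocabulary formula $|w_{v_i}|/|D|$ already exploited in Proposition \ref{propos:prob} and Lemma \ref{lemma:prob}. Summing these masses over $j=1,\dots,M$ then yields
\[
P(\omega) \;=\; \sum_{j=1}^{M} P(\omega_j) \;=\; \sum_{j=1}^{M} \frac{N_j}{|\Omega|},
\]
which is the identity to be shown. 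By construction the right-hand side equals $1$, so the calculation also serves as a consistency check that $P$ is a genuine probability distribution on $\Omega$, and in turn justifies the equivalence $P(D)\equiv P(\omega)$ claimed in the sentence preceding the proposition.

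The principal obstacle is interpretive rather than technical: the statement conflates the probability of a particular page with the aggregate over all pages, so I would arrange the proof so that the per-page formula $P(\omega_j)=N_j/|\Omega|$ is derived explicitly first and the claimed total is then assembled as a straightforward sum. Once that ordering is fixed the argument is essentially a one-line invocation of Definition \ref{def:doc} together with the convention $|\Omega|=\sum_{j=1}^{M}N_j$ recorded just above the proposition; no new estimates or inequalities are needed beyond those already available in Proposition \ref{propos:prob} and Lemma \ref{lemma:prob}.
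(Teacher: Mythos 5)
The paper offers no written proof for this proposition---it is presented as an immediate consequence of the convention $|\Omega|=\sum_{j=1}^{M}N_j$ and the identification of $\Omega$ with the corpus ${\cal D}$ stated in the preceding paragraph---and your argument is precisely that unpacking: per-page mass $P(\omega_j)=N_j/|\Omega|$ in analogy with Proposition \ref{propos:prob}, summed over $j=1,\dots,M$. So your proposal is correct and takes essentially the same (definitional) route the paper intends, with the added merit of making explicit that the displayed sum is really the aggregate over all pages (equal to $1$), which resolves the statement's conflation of a single page's probability with the total.
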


\begin{definition}
Let $t_a$ is a term search. ${\cal S}_a = \{S_i|i=1,\dots,n\}$ is a list of snippets that are returned by a search engine for $t_a$, where a snippet $S = \{w_j|j=1,\dots,|S|\}$, and $|S| = \pm 50$ words.
\end{definition}

Each word in document or snippet can take the meaning by giving a vector space. One of methods is to use the $tf\cdot idf$, i.e., 
\begin{equation}
\label{pers:tfidf}
tf\cdot idf = tf(w)\cdot idf(w) = \Big(\sum_{j=1}^M\sum_{i=1}^{|w_{v_i}|}\frac{1}{N_j}\Big)\Big(\log\frac{M}{df(w)}\Big)
\end{equation}
$df(w)$ is the number of document where the word $w$ appearance. Normalization of $tf\cdot idf$ is $tf\cdot idf/h$, $h = $ a highest score of $tf\cdot idf$. 

\begin{proposition}
If $z_k$ is a latent class, the $z_k$ relates to the document $D$.
\end{proposition}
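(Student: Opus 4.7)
My reading of the statement is that ``latent class'' means an element $z_k\in Z$ that is not directly observed in the surface form of the document (it does not appear as a word token of $D$), and ``relates to'' means that there is a non-trivial conditional-probability link between $z_k$ and $D$ that can be expressed in terms of observables already introduced (word counts, hit counts, $tf\cdot idf$ weights). With that reading, the plan is to build the link through Bayes' rule and reduce every term on the right-hand side to quantities supplied by Proposition~\ref{propos:prob}, Lemma~\ref{lemma:prob}, and the $tf\cdot idf$ formula \eqref{pers:tfidf}.

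First I would invoke the Bayes identity already displayed in Section~3 with the roles of $a$ and $D$ replaced by $z_k$ and $D$, writing
\begin{equation*}
P(z_k\mid D) \;=\; \frac{P(D\mid z_k)\,P(z_k)}{P(D)} .
\end{equation*}
This reduces the claim to showing that each of the three quantities on the right is well defined and strictly positive whenever $z_k$ is a latent class attached to the generative process that produced $D$. The prior $P(z_k)$ is available from the corpus-level word probability in Lemma~\ref{lemma:prob} applied to the term(s) realising $z_k$; the evidence $P(D)$ is available from the previous proposition $P(\omega)=\sum_{j=1}^M N_j/|\Omega|$ under the identification of $D$ with an indexed page $\omega$.

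Second, I would handle the likelihood $P(D\mid z_k)$, which is the only term that really needs the latent structure. Using Definition~\ref{definisi:pgm}, the PGM function $\lambda$ maps $z_k$ into the label space $C$, so that the words of $D$ are generated conditionally on $z_k$ via the class $\lambda(z_k)$. Writing $D=\{w_1,\dots,w_N\}$ and using the word-wise mass function from Proposition~\ref{propos:prob}, I would expand
\begin{equation*}
P(D\mid z_k) \;=\; \prod_{i=1}^{N} P(w_i\mid z_k) ,
\end{equation*}
and then read each factor off the $tf\cdot idf$ representation \eqref{pers:tfidf}: the $tf$ piece is exactly the corpus-level word frequency provided by Lemma~\ref{lemma:prob}, and the $idf$ piece plays the role of the discriminative weight that ties a word to a latent attribute. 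Substituting these into the Bayes expression yields a closed-form, strictly positive expression $P(z_k\mid D)>0$ purely in terms of the features $N_j$, $|w_{v_i}|$, $M$, and $df(w)$, which is what ``relates to'' should mean in this framework.

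The main obstacle, and the place where I expect most of the argument's weight to sit, is the word ``latent'': the paper never defines it explicitly, so the proof must fix a precise interpretation (a $z_k\in Z$ whose occurrence in $D$ is mediated by the PGM class $\lambda(z_k)\in C$ rather than by direct lexical appearance) and then verify that this interpretation is consistent with both Definition~\ref{definisi:pgm} and the assumption $Z\cap C=\emptyset$. Once that interpretation is in place, the rest is bookkeeping with the earlier probability definitions; the only subtle step is justifying the conditional independence of words given $z_k$, which I would either assume as the standard ``bag-of-words'' hypothesis implicit in the $tf\cdot idf$ construction, or argue for by appealing to the uniform mass function $P:\Omega\to[0,1]$ together with Proposition~\ref{prop:doubleton} applied to each word of $D$.
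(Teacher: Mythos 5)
Your Bayes-rule route is genuinely different from the paper's argument, and it contains two concrete gaps. The paper's own proof is much more modest: it takes each latent class $z_k$ to be characterized by a set of words ${\bf w}_k$, with distinct classes having disjoint word sets, notes that by Lemma \ref{lemma:prob} every word of $D$ (and hence every word lying in $\cup_{k}{\bf w}_k$) has a unique probability $p(w)\in[0,1]$, and concludes from this shared word-level structure that there is a possibility that $z_k$ is associated with $D$; no posterior $P(z_k\mid D)$ is constructed and no positivity is claimed. Your argument, by contrast, needs $P(D\mid z_k)=\prod_i P(w_i\mid z_k)$ to be strictly positive, and this fails precisely under the paper's own assumption that the class word sets are disjoint ($\cap_{i\in I}{\bf w}=\emptyset$): a document will in general contain words outside the characteristic set of $z_k$, so some factors $P(w_i\mid z_k)$ are zero (or undefined) and the product collapses. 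Asserting positivity ``whenever $z_k$ is attached to the generative process that produced $D$'' assumes exactly the relation the proposition asks you to establish.

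The second gap is how you propose to populate the factors: reading $P(w_i\mid z_k)$ off Equation (\ref{pers:tfidf}) together with Lemma \ref{lemma:prob} yields quantities ($tf$, $idf$, corpus-level frequencies) that do not depend on $z_k$ at all, so the ``conditional'' likelihood you build is class-independent and the Bayes step becomes vacuous --- it cannot distinguish relating $D$ to $z_k$ from relating $D$ to any other class. Relatedly, you interpret $z_k$ as an attribute in $Z$ mediated by $\lambda:Z\rightarrow C$ of Definition \ref{definisi:pgm}, whereas the object the paper intends is a latent topic in ${\cal Z}$ (introduced immediately after this proposition), characterized directly by its word set; that mismatch is forgivable given the paper's overloaded notation, but it means the machinery you invoke ($\lambda$, $C$, $Z\cap C=\emptyset$) is not what carries the paper's proof. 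If you drop the positivity and $tf\cdot idf$ claims and argue only, as the paper does, that the words characterizing $z_k$ acquire well-defined probabilities inside $D$ via Proposition \ref{propos:prob} and Lemma \ref{lemma:prob}, you recover the (admittedly weak) published argument.
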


\begin{proof}
Each class of $z_k$ contains a set of words ${\bf w}_k$ as characteritics of class, and each ${\bf w}$ with the values is a vector space whereby if there are $k$ classes of words then intersection of classes is emptyset, $\cap_{i\in I} {\bf w} =\emptyset$. In $D$, based on Lemma \ref{lemma:prob} $\forall w\in D \exists! p(w) \in [0,1]$, so if $\forall w\in\cup_{k=1}^K {\bf w}_k$ then $\exists! p(w) \in [0,1]$. Thus, there is a possibility that $z_k$ associated with $D$.
\end{proof}

Specifically, a document $D$ is potentially related to several topics ${\cal Z}$ with different probabilities, i.e., a set of latent variables ${\cal Z} = \{z_1,\dots,z_K\}$. Therefore, by using same reason  we obtain the following lemma, where latent variables consequently generate a set of words ${\bf w}$.

\begin{lemma}
If $z_k$ is a latent class, the word $w$ can be generated with probability $P(w|z_k)$.
\end{lemma}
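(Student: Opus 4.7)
The plan is to mimic and extend the reasoning of the immediately preceding proposition, which already provides the bridge from a latent class $z_k$ to a document $D$. That proposition established that each latent class $z_k$ owns a characteristic word set $\mathbf{w}_k$, that these sets are pairwise disjoint, and that every $w \in \bigcup_{k=1}^K \mathbf{w}_k$ possesses a well-defined mass $p(w) \in [0,1]$ coming from Lemma \ref{lemma:prob}. My task is to upgrade this existence statement into a conditional probability $P(w|z_k)$.

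First I would invoke Proposition \ref{propos:prob} and Lemma \ref{lemma:prob} to obtain $P(w|D) = |w_{v_i}|/|D|$ for any document $D$ and any word $w$ occurring in it, and to extend this to the corpus-level probability $p(w_{v_i}) = \sum_{j=1}^M |w_{v_i}|/(M|D|)$ already derived in the lemma. This provides the document-conditioned ingredient of the conditional law I am after.

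Next I would apply the Bayes machinery introduced in Section~3 together with the preceding proposition. Since $z_k$ is associated with $D$, I can write, by marginalisation over the corpus,
\[
P(w|z_k) \;=\; \sum_{D \in \mathcal{D}} P(w|D)\,P(D|z_k),
\]
where $P(D|z_k)$ is guaranteed to exist by the previous proposition and $P(w|D)$ by Lemma \ref{lemma:prob}. Because $w \in \mathbf{w}_k$ and the $\mathbf{w}_k$ are disjoint, each term on the right is a product of two numbers in $[0,1]$, so the sum lies in $[0,1]$; normalisation follows from $\sum_{D} P(D|z_k) = 1$. Hence $P(w|z_k)$ is well-defined and a word $w$ can be generated from $z_k$ with this probability.

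The delicate step, and the one I expect to require the most care, is justifying the existence and coherence of $P(D|z_k)$: the preceding proposition only asserts a \emph{possibility} of association between $z_k$ and $D$, not a calibrated distribution. I would therefore make this rigorous by appealing to Definition \ref{definisi:pgm}, which supplies the classifying map $\lambda : Z \to C$, so that for each latent class $z_k$ the inverse image $\lambda^{-1}(z_k)$ induces a well-defined conditional measure on the documents containing words of $\mathbf{w}_k$; this, together with the $tf\cdot idf$ weighting of equation \eqref{pers:tfidf}, pins down $P(D|z_k)$ and closes the argument.
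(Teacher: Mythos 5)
Your plan is substantially more ambitious than what the paper actually does, and the extra machinery is where it breaks. The paper offers no mixture decomposition here at all: it simply says that "by using same reason" as the preceding proposition --- each latent class $z_k$ carries a characteristic word set ${\bf w}_k$, and by Lemma~\ref{lemma:prob} every such word has a unique probability in $[0,1]$ --- the word $w$ can be generated with some probability $P(w|z_k)$. That is an informal existence argument, not a construction of the conditional law, and it is all the paper claims at this point.

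Your construction has two genuine gaps. First, the identity
\[
P(w|z_k)=\sum_{D\in{\cal D}}P(w|D)\,P(D|z_k)
\]
is only the law of total probability if you additionally assume $P(w|D,z_k)=P(w|D)$, i.e.\ that the latent variable and the word are conditionally independent given the document. That is precisely the aspect-model assumption ("$z$ breaks the direct relationships between documents, words, and names") which the paper introduces only later, in the Framework section, as a consequence of Theorem~\ref{dalil:campur} and of this lemma; importing it here to prove the lemma is circular relative to the paper's own development. Second, your repair of the "delicate step" does not work: Definition~\ref{definisi:pgm} gives a map $\lambda:Z\rightarrow C$ from attributes to predefined class labels with $Z\cap C=\emptyset$; its inverse image $\lambda^{-1}(z_k)$ is a set of attributes, not a probability measure on documents, and the $tf\cdot idf$ weights of Equation~(\ref{pers:tfidf}) are term weights, not normalized document probabilities, so neither ingredient actually "pins down" $P(D|z_k)$. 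The honest versions of your argument would either (a) posit $P(D|z_k)$ (or $P(z_k|D)$ plus Bayes) as a modelling assumption, which is what generative aspect models do, or (b) retreat to the paper's weaker claim: the class's word set together with Lemma~\ref{lemma:prob} guarantees each $w\in{\bf w}_k$ a well-defined weight, which is then interpreted as the generation probability $P(w|z_k)$.
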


\begin{proposition}
If $z_k$ is a latent class, the $z_k$ relates to the actor $a \in A$.
\end{proposition}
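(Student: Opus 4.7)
The plan is to mimic the structure of the preceding proposition (the one that showed $z_k$ relates to a document $D$), but to bridge from actors to words through the search-engine layer rather than directly. Each actor $a\in A$ is in bijection with a vertex $v\in V$ by the function $\xi$ of Definition 1, and carries an attribute set $Z_a$ by Definition 2. What I actually need is a set of words attached to $a$ on which the probability machinery from Proposition \ref{propos:prob} and Lemma \ref{lemma:prob} can operate. The natural bridge is the search term $t_a$ representing $a$ and its singleton event $\Omega_a\subseteq\Omega$ from Definition \ref{def:singleton}: the web pages in $\Omega_a$ play the role of a sub-corpus ${\cal D}_a$ of ${\cal D}$ that is specifically associated with the actor $a$.

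First I would fix $t_a$ for the actor $a$ and invoke Definition \ref{def:singleton} together with the equivalence $P(\omega)=\sum_{j=1}^{M}N_j/|\Omega|$ established right before this proposition, so that every $\omega\in\Omega_a$ can be treated as a document $D\in{\cal D}_a$ with a well-defined probability. Next I would apply Lemma \ref{lemma:prob} inside each such $\omega$: every vocabulary word $w_{v_i}$ occurring in $\omega$ receives a weight $p(w_{v_i})\in[0,1]$, and these weights populate the vector space $\mathrm V$ of that document. Then, invoking the earlier proposition on document-latent class association, for each $D\in{\cal D}_a$ there exists a positive possibility that $z_k$ is associated with $D$, because ${\bf w}_k$ and the word set of $D$ may intersect in words whose probabilities are strictly positive.

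Finally I would aggregate across ${\cal D}_a$: since $a$ is identified with the entire collection $\Omega_a$ via $t_a$, and each document in $\Omega_a$ is associated with $z_k$ with some probability, the association transfers to the actor by summing (or, if one prefers, by taking the Bayes-style conditional) over the pages in $\Omega_a$. Concretely, one obtains a positive $P(z_k\mid a)$ whenever the characteristic set ${\bf w}_k$ meets the vocabulary of at least one page in $\Omega_a$, which is the formal statement of "relates to." The main obstacle I anticipate is the conceptual step of identifying the abstract actor $a$ with the word-level data of $\Omega_a$ in a way that is compatible with the disjointness condition $\cap_{i\in I}{\bf w}_i=\emptyset$ used earlier; once that identification is justified through $t_a$ and the corpus-equivalence of $\Omega$ and ${\cal D}$, the rest is a direct transcription of the previous proof with $D$ replaced by the sub-corpus ${\cal D}_a$ attached to $a$.
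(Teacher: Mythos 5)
Your proposal is correct in the sense the paper intends, but it takes a noticeably different route from the paper's own argument. The paper's proof is a two-sentence informal chain: each actor uses words to communicate, some of that communication is recorded in documents, hence the documents relate to the actor; and when those same words are present in $z_k$ (the characteristic word set of the class), a relation between $a$ and $z_k$ is obtained. You instead anchor the actor--document link in the web machinery of Section 3: the search term $t_a$, its singleton event $\Omega_a\subseteq\Omega$ from Definition \ref{def:singleton}, the corpus equivalence $\Omega\sim{\cal D}$, then Lemma \ref{lemma:prob} for word weights, the preceding document--latent-class proposition applied page by page, and finally an aggregation over $\Omega_a$ yielding a positive $P(z_k\mid a)$. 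What your route buys is a formal, checkable bridge that uses only previously stated definitions and results, and it anticipates exactly the quantities ($P(a\mid z)$, snippet-based sub-corpora) that the framework section later exploits; the paper's route buys brevity and matches the informal level of rigor of its other proofs, but it asserts the actor--document link sociologically rather than deriving it. One caveat on your side: identifying $a$ with all of $\Omega_a$ inherits the name-ambiguity bias the paper itself flags (pages containing $t_a$ need not pertain to the actor), so your $P(z_k\mid a)>0$ conclusion is really ``relates to'' in the same weak, possibilistic sense as the paper's ``there is a possibility that $z_k$ is associated with $D$''; also, the disjointness condition $\cap_k{\bf w}_k=\emptyset$ concerns the classes being mutually exclusive and is not actually an obstacle to your identification, so that worry can be dropped.
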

\begin{proof}
Each actor will use words to communicate with other actors in the social, some of communication be recorded in the document, so it will relate to the intended actor. When that word also be present in every $z_k$, then we obtain a relation between $a$ and $z_k$.
\end{proof}

The latent variables can generate name appearances $A$ that are closely related to a specific topic. 

\begin{lemma}
If $z_k$ is a latent class, the actor $w$ can generated with probability $P(a|z_k)$.
\end{lemma}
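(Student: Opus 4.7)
The plan is to mirror the proof strategy used for the parallel word-version lemma, combined with the immediately preceding proposition that each latent class $z_k$ relates to some actor $a \in A$. First, I would use the singleton construction of Definition \ref{def:singleton} to view each actor name as a search term $t_a$ with associated singleton event $\Omega_a \subseteq \Omega$, giving a well-defined probability $P(a) = |\Omega_a|/|\Omega|$ exactly as in the displayed identity $P(t_a) = |\Omega_a|/|\Omega|$ established earlier. This embeds actor occurrences into the same probability space already used for words, so that the machinery of Proposition \ref{propos:prob} and Lemma \ref{lemma:prob} applies uniformly to $a$.

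Second, I would invoke the preceding proposition to assert that $z_k$ has a nonempty relation to $a$: each latent class is identified with a characteristic word set ${\bf w}_k$, and the actor $a$'s communications---recorded as documents/pages in the corpus---share vocabulary with ${\bf w}_k$. The co-occurrence of $t_a$ and terms drawn from ${\bf w}_k$ inside the same web pages then supplies, via Proposition \ref{prop:doubleton}, a doubleton event whose probability $P(a, z_k) = |\Omega_a \cap \Omega_{z_k}|/|\Omega|$ is well defined and strictly positive whenever the relation is nontrivial.

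Finally, the conditional-probability definition from Section 3 (equivalently Bayes' theorem, in the form $P(a \cap D) = P(a)P(D|a)$ applied with $D$ replaced by the event associated with $z_k$) yields
\[
P(a \mid z_k) \;=\; \frac{P(a \cap z_k)}{P(z_k)},
\]
which is exactly the generating probability claimed. The statement then reads: given the latent class $z_k$, the actor $a$ is produced with probability $P(a|z_k)$, in complete analogy with the preceding word-level lemma.

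The step I expect to be the main obstacle is the second one, namely legitimising $z_k$ itself as an event in $\Omega$ so that the doubleton $\Omega_a \cap \Omega_{z_k}$ makes literal sense; $z_k$ has only been introduced indirectly through its characteristic word set ${\bf w}_k$. I would resolve this by identifying the event ``$z_k$ occurs'' with the union (or joint occurrence) of the singleton events of words in ${\bf w}_k$, which is consistent with the convention used in Lemma \ref{lemma:prob} and in the preceding proposition, and which keeps $P(z_k)$ well defined as a mass in $[0,1]$. Once this identification is in place, the rest is a direct conditional-probability calculation and the conclusion follows.
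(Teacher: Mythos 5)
Your proposal is sound, but it is considerably more than the paper itself does: the paper gives no proof of this lemma at all. In the source, the lemma is stated immediately after the proposition that a latent class $z_k$ relates to the actor $a\in A$ (whose proof is the informal remark that actors communicate through words recorded in documents, and shared words give a relation), with only the bridging sentence that ``the latent variables can generate name appearances $A$ that are closely related to a specific topic''---the same analogy move used for the word-level lemma, which is likewise left without a formal proof (``by using same reason''). Your route reaches the same conclusion but makes it concrete: you embed actor names as singleton events $\Omega_a$ in the search-engine space, invoke the preceding proposition for a nontrivial relation between $a$ and $z_k$, realize $z_k$ itself as an event through its characteristic word set ${\bf w}_k$, form the doubleton, and read off $P(a\mid z_k)=P(a\cap z_k)/P(z_k)$ from the conditional-probability definition of Section 3. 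What the paper's (non-)argument buys is consistency with its loose generative-model narrative, where $P(a|z_k)$ is simply postulated as a model parameter once a relation exists; what your argument buys is that $P(a|z_k)$ becomes a literally well-defined quantity in the probability space $\Omega$ the paper has already set up, at the cost of one extra convention---identifying the event ``$z_k$ occurs'' with the occurrence of its words ${\bf w}_k$---which the paper never commits to, but which you correctly flag and which is compatible with the paper's assumption that the word classes ${\bf w}_k$ are pairwise disjoint. In short: no gap, a genuinely more explicit derivation than the paper offers; just be aware that your second and third steps supply structure (the event interpretation of $z_k$ and positivity of the doubleton) that the paper leaves entirely implicit.
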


A concept treats document-name-word as a triplet $\langle D,a,w\rangle$ is to represent an instance that a name $a$ appears in document $D$, which contains the word $w$. The relationship inherent in this concept is associated by a set of topics ${\cal Z}$, where a set of latent variables $z$ can break the direct relationships between documents, words, and names.

\begin{theorem}
\label{dalil:campur}
Let ${\cal Z}$ is a set of latent variables, ${\cal Z} = \{z_l|l=1,\dots,L\}$, with size $L$, each of which represents a latent topic, if and only if the relationships between $D\in {\cal D}$, $a\in A$, and $w \in D$ are connected by ${\cal Z}$.  
\end{theorem}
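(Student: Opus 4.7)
The plan is to establish the biconditional by leveraging the three immediately preceding results: the proposition that $z_k$ relates to a document $D$, the lemma that $z_k$ generates a word $w$ with probability $P(w|z_k)$, and the lemma that $z_k$ generates an actor $a$ with probability $P(a|z_k)$. The underlying intuition is that the triplet $\langle D,a,w\rangle$ is precisely the object that the latent variables are designed to mediate, exactly as in PLSA/LDA-style generative models.

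For the forward direction ($\Rightarrow$), I would assume ${\cal Z} = \{z_l\mid l=1,\dots,L\}$ is a set of latent topic variables. Invoking the three previous results in turn, each $z_l$ is simultaneously related to some $D\in{\cal D}$, generates $w\in D$ with probability $P(w\mid z_l)$, and generates $a\in A$ with probability $P(a\mid z_l)$. I would then apply Bayes theorem (as developed in Section~3) to the joint event over the triplet, using conditional independence of $a$ and $w$ given $z_l$, to obtain a mixture decomposition of the form $P(D,a,w)=\sum_{l=1}^{L} P(z_l\mid D)\,P(a\mid z_l)\,P(w\mid z_l)\,P(D)$. This exhibits ${\cal Z}$ as the only channel through which $D$, $a$ and $w$ depend on one another, which is what ``connected by ${\cal Z}$'' means operationally.

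For the reverse direction ($\Leftarrow$), I would assume that the relationships between $D$, $a$ and $w$ are mediated by some variable set ${\cal Z}$ and show that each $z_l\in{\cal Z}$ must be a latent topic. Using Definition~\ref{definisi:pgm}, the map $\lambda:Z\to C$ assigns each attribute a class label, and because ${\cal Z}$ mediates all three of $D$, $a$ and $w$, each $z_l$ must act as a class that is unobserved in the data (hence latent) and that groups documents, words and names according to shared content (hence a topic). The disjointness property $\cap_{i\in I}{\bf w}_i=\emptyset$ used in the earlier proof ensures that the $z_l$ partition the attribute space non-trivially, so they are genuine latent topics rather than degenerate labels.

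The step I expect to be the main obstacle is the reverse direction, because the phrase ``connected by ${\cal Z}$'' is not given a fully formal definition anywhere in the excerpt. The hard part will be pinning down precisely what mediation means and then arguing that any mediator for all three of $\langle D,a,w\rangle$ must inherit both the ``latent'' and the ``topic'' attributes in the sense of Definition~\ref{definisi:pgm}. I would handle this by reading ``connected by ${\cal Z}$'' as requiring the joint $P(D,a,w)$ to factor through conditional probabilities $P(\cdot\mid z_l)$ as in the forward direction, so that the two implications become structurally symmetric and the biconditional is essentially a restatement of the PGM mixture assumption for the triplet $\langle D,a,w\rangle$.
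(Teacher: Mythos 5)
Your forward direction---assembling the preceding proposition that $z_k$ relates to $D$ together with the two lemmas giving $P(w|z_k)$ and $P(a|z_k)$---is exactly the paper's entire argument, whose proof reads in full: ``A direct result of some of previous lemmas.'' Your explicit mixture factorization and your attempted converse (including the candid observation that ``connected by ${\cal Z}$'' is never formally defined, which you resolve by reading it as factorization of the joint through $P(\cdot\,|z_l)$) go beyond anything the paper supplies, so the proposal matches the paper's approach and is, if anything, more complete than the original.
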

\begin{proof}
A direct result of some of previous lemmas.
\end{proof}

\section{The Framework}

There are five steps of Bayesian methodology \cite{duda1973} to classify something based on Definition \ref{definisi:pgm}: (i) Collect data, and estimate parameters such as mean and coveriance for each class. In this case, assume that all the probability density functions have a Gaussian behavior. (ii) Choose a set of features. (iii) Choose a mode and derive a decision rule with these parameters. (iv) Tray the classifier and apply the decision rule by using a discriminant function, and apply it to a test data set to classify each sample. (v) Evaluate the decision rule. Measure the accuracy/error rate in order to improve the choice of features and the overall design of the classifier. 

In the exploration of social networks that involves web pages, the ties is not just acquired from the hit count, but other features. There are features such as web-snippets that are not only composed a collection of words, a part of them refers to the person names, but there are also the addresses of the web page, URL. For each search term $t_a$, represents a name $a\in A$, the list of snippets ${\cal S}$ will be obtained where $a\in S \subseteq {\cal S}$. Therefore, we can considered the results returned by search engines based on the name as a probability $p(\omega|a)$. Furthermore, each list of snippets can be modeled as a bag of words to generate a vector space in which their weight is obtained by Equation (\ref{pers:tfidf}), and this can be interpreted as the conditional probability, i.e., $P(w|a)$, $P(w|S)$, and $P(w|{\cal S})$ which is meaningful as the current context \cite{nasution2010b} of each person name. 
 
We can provide a latent class $z$ by using the clusters of words from the snippets. For every $a\in A$, ${\cal S}_a$ is potentially a description of the actors and the relations between the actors by using Definition \ref{def:singleton}, Proposition \ref{prop:doubleton} and Equation (\ref{pers:jaccard}) so that the current context is in the trees as the optimal form of relationship between words and person name $a\in A$. In this case, once provided the topics for example based on the existing research group, the trees of words can be selected based on their proximity to the topics by involving singleton and doubleton. The selected tree of words will be a description of the latent variable $z$. Under this scenario, we can derived the aspect model that involves joint probability over $D\times a\times w$ is expressed as mixture
\begin{equation}
P(S,a,w) = P(S)P(a,w|S)
\end{equation}
\begin{equation}
P(a,w|S) = \sum_{z\in {\cal Z}} P(a,w|z)P(z|S)
\end{equation}

The latent variables $z$ connected each instance of which can be obtained from the list of snippets such as names, words and documents, but every instance that was also having relations with one another. Therefore, based on Theorem \ref{dalil:campur}, the optiomal form of this relationship is only connected by a latent variables with disconnecting from each relation between instances, and an symmetric model can be parameterized by
\begin{equation}
P(S,a,w) = \sum_{z\in{\cal Z}} P(z)P(S|z) P(w|z)P(a|z).
\end{equation}

\section{Conclusion and Future Work}
We have proposed a novel framework for acquiring the social networks from snippets. We will demonstrated this after getting the Expectation-Maximization (EM) of the model.


\begin{thebibliography}{100}

\bibitem{nasution2010a} Nasution, Mahyuddin K. M. and Noah, S. A. 2010. Extracting social networks from web documents. {\it The Ist National Doctoral Seminar on Artificial Intelligence Technology}. CAIT, Universiti Kebangsaan Malaysia, Bangi: 278-281.
\bibitem{xu2004} Xu, J. and Chen, H. 2004. Fighting organized crimes: using shortest-path algorithms to identify associations in criminal networks. {\it Decision Support Systems}, 28: 473-487.
\bibitem{krebs2002} Krebs, V. E. 2002. Mapping networks of terrorist cells. {\it Connections}, 24: 43-52.
\bibitem{richardson2002} Richardson, M. and Domingo, P. 2002. Mining knowledge-sharing sites for viral marketting. {\it KDD}: 61-70.
\bibitem{kempe2003} Kempe, D., Kleinberg, J. and Tardos, E. 2003. Maximizing the spread of influence through a social network. {\it KDD}: 137-146.
\bibitem{boyd2004} Boyd, D. M. 2004. Friendster and publicly articulated social networks. {\it ACM Conference on Human Factors in Computing Systems ACM Press}, Vienna, 2004: 1279-1282.
\bibitem{nowicki2001} Nowicki, K. and Snijders, T.A.B. 2001. Estimation and prediction for stochastic blockstructures. {\it Journal of the American Statistical Association}, 96(455):1077–1087.
\bibitem{kubica2002} Kubica, J., Moore, M., Schneider, J., and Yang, Y. 2002. Stochastic link and group detection. In {\it The 17th National Conference on Artificial Intelligence} (AAAI).
\bibitem{bhattacharya2004} Bhattacharya, I. and Getoor, L. 2004. Deduplication and group detection using links. In {\it The 10th SIGKDD Conference Workshop on Link Analysis and Group Detection} (LinkKDD).
\bibitem{garton1997} Garton, L., Haythornthwite, C., and Wellman, B. 1997. Studying online social networks. {\it Journal of Computer Mediated Communication}, 3(1).
\bibitem{nasution2011} Nasution, Mahyuddin K. M. and Noah, S. A. 2011. Extraction of academic social network from online database. {\it 2011 International Conference on Semantic Technology and Information Retrieval} (STAIR'11), IEEE Computer Society: 64-69.
\bibitem{schwartz1993} Schwartz, M F. and Wood, D. C. M. 1993. Discovering shared interests using graph analysis. {\it CACM}, 36: 78-89.
\bibitem{agrawal2003} Agrawal, R., Rajagopalan, S., Srikant, R., and Xu, Y. 2003. Mining newsgroups using networks arising from social behavior. {\it WWW}: 688-703.
\bibitem{resig2004} Resig, J, Dawara, S., Homan, C. M., and eredessai, A. 2004. Extracting social networks from instant messaging populations. In {\it Workshop on Link Analysis and Group Detection} (In conj. with KDD).
\bibitem{wasserman1994} Wasserman, S., and Faust, K. 1994. {\it Social Network Analysis: Methods and Applications}. Cambridge University Press: Cambridge.
\bibitem{kumar2004} Kumar, R., Novak, J., Raghavan, P., and Romkins, A. 2004. Structure and evolution of blogspace. {\it Communications of the ACM}, 47(12): 35-39.
\bibitem{marsden1984} Marsde, P., and Campbell, K. E. 1984. Measuring the strength. {\it Social Forces}, 63: 482-501.
\bibitem{wellman1990} Wellman, B., and Wortley, S. 1990. Different strokes from different folks: Community ties and social support. {\it American Journal of Sociology}, 96: 558-588.
\bibitem{fish1992} Fish, R. S., Kraut, R. E., Root, R. W., and Rice, R. E. 1992. Evaluating video as a technology for informal communication. Paper presented at the Striking a Balance: {\it CHI'92 Conference Proceedings}, New York, NY.
\bibitem{haythornthwaite1995}  Haythornthwaite, C., Wellman, B., and Mantei, M. 1995. Work relationships and media use: A social network analysis. {\it Group Decision and Negotiation}, 4(3): 193-211.
\bibitem{rice1987} Rice, R. and Love, G. 1987. Electronic emotion: Socioemotional content in a computer-mediated communication network. {\it Communication Research}, 14(1): 85-108.
\bibitem{daft1986} Daft, R. L., and Lengel, R. H. 1986. Organizational information requirements, media richness and structural design. {\it Management Science}, 32: 554-571.
\bibitem{vandeven1979} Van de ven, A., Walker, G., and Liston, J. 1979. Coordination patterns within an interorganizational network. {\it Human Relations}, 32: 19-36.
\bibitem{kiesler1992} Kiesler, S., and Spraoull, L. 1992. Group decision making and communication technology. {\it Organizational Behaviour and Human Deceision Processes}, 52: 96-123.
\bibitem{mccallum2004} McCallum, A., Corrada-Emmanuel, A., and Wang, X. 2004. The Author-Recipient Topic model for topic and role discovery in social networks: Experiemnts with Enron and Academic Email. {\it Technical Report, Univeristy of Massachusets}, Amherst, UM-CS-2004-096.
\bibitem{mccallum2005} McCallum, A., Corrada-Emmanuel, A., and Wang, X. 2005. Topic and role discovery in socialnetworks. {\it Proceedings of the 19th International Joint Conference on Artificial Intelligence}: 786-791.
\bibitem{mccallum2007} McCallum, A., Wang, X., and Corrada-Emmanuel, A. 2007. Topic and role discovery in social networks with experiments on Enron and Academic Email. {\it Journal of Artificial Intelligence Research}, Vol. 30: 249-272.
\bibitem{mcgrath1983} McGrath, J. E. 1983. {\it Groups: Interaction and Performance}. Englewood Cliffs. NJ Prentice-Hall.
\bibitem{mcgrath1990} McGrath, J. E. 1990. Time matters in groups. In J. Galegher, R. E. Kraut, and C. Egido (Eds.), {\it Intellectual Teamwork: Social and Technological Foundation of Cooperative Work}, Hillsdale, NJ: Laurence Eribaum Associate: 23-61.
\bibitem{mcgrath1991} McGrath, J. E. 1991. Time, interaction and performance (TIP): A theory of groups. {\it Small Groups Research}, 22: 147-174.
\bibitem{garton1995} Garton, L., and Wellman, B. 1995. Social impacts of electronic mail in organizations: A review of the research literature. {\it Communication Yearbook}, 18: 434-453.
\bibitem{haythornthwaite1996c} Haythornthwaite,`C. 1996. Which kinds of network members communicate by email or face-to-face for what kinds of work?. University of Toronto: {\it Centre for Urban and Community Studies}, 45. 
\bibitem{wellman1997} Wellman, B., and Gulia, M. 1997. Net surfers don't ride alone: Virtual community as community. In P. Kollack \& M. Smith (Eds.), {\it Communities in Cyberspace}, Berkeley: University of California Press.
\bibitem{matsuo2006} Matsuo, Y., Mori, J., and Hamasaki, M. 2006. POLYPHONET: An advanced social network extraction system from the web. {\it WWW 2006}, Edinburgh, Scotland: ACM. 
\bibitem{matsuo2007} Matsuo, Y., Mori, J., Hamasaki, M., Nishimura, T., Takeda, H., Hasida, K. and Ishizuka, M. 2007. POLYPHONET: An advanced social network extraction system from the web. {\it Journa of Web Semantic}, Web Semantics: Science, Services and Agents on the World Wide Web, 5: 262-278.
\bibitem{nasution2010b} Nasution, Mahyuddin K. M. and Noah, S. A. 2010. Superficial method for extracting social network for academic using web snippets. In Yu, J. et al., (eds.), {\it Rough Set and Knowledge Technology}, LNAI 6401, Heidelberg: Springer: 483-490.
\bibitem{carley1991} Carley, K. 1991. A theory of group stability. {\it American Sociological Review}, 56: 331-354.
\bibitem{blei2003} Blei, D. M., Ng, A. Y., and Jordan, M. I. 2003. Latent dirichlet allocation. {\it J. Mach. Learn. Res.}, 3: 993-1022.
\bibitem{mccallum1999} McCallum, A. 1999. Multi-label text classification with a mixture model trained by EM. {\it AAAI Workshop on Text Learning}.
\bibitem{rosenzvi2004} Rosen-Zvi, M., Griffiths, T., Steyvers, M., and Symth, P. 2004. The author-topic model for authors and documents. {\it Proceedings of the 20th Conference on Uncertainty in Artificial Intelligence}. Banff, Alberta, Canada.
\bibitem{steyvers2004} Steyvers, M., Smyth, P., Rosen-Zvi, M. and Griffths, T. 2004. Probabilistic author-topic models for information discovery. {\it The Tenth ACM SIGKDD International Conference on Knowledge Discovery and Data Mining}. Seattle, Washington.
\bibitem{duda1973} Duda, O., and Hart, P. 1973. {\it Pattern Recognition and Scene Analysis}. New York: Wiley.
\end{thebibliography}
\end{document}